\documentclass[11pt]{amsart}

\usepackage[T1]{fontenc}
\usepackage[utf8]{inputenc}
\usepackage{lmodern}
\usepackage{microtype}
\usepackage{amsmath,amssymb}
\usepackage[colorlinks=true,linkcolor=blue,citecolor=blue,urlcolor=blue]{hyperref}

\newtheorem{theorem}{Theorem}[section]
\newtheorem{lemma}[theorem]{Lemma}

\theoremstyle{remark}

\newcommand{\Q}{\mathbb Q}
\newcommand{\R}{\mathbb R}
\newcommand{\Z}{\mathbb Z}
\DeclareMathOperator{\ord}{ord}

\title[Neighbouring denominators and Erd\H{o}s--Mahler]
{A Neighboring-Denominator Variant of the Erd\H{o}s--Mahler Conjecture}

\author{Diego Marques}
\address{Departamento de Matem\'atica, Universidade de Bras\'ilia, Bras\'ilia, 70910-900, Brazil}
\email{diego@mat.unb.br}

\subjclass[2020]{Primary 11J70; Secondary 11J86, 11A51}
\keywords{continued fractions, Liouville numbers, smooth numbers, prime factors, $p$-adic logarithmic forms}

\begin{document}

\begin{abstract}
We prove a quantitative neighboring-denominator variant of the
Erd\H{o}s--Mahler conjecture. Let $p_n/q_n$ be the convergents of an
irrational real number $\xi$. If $p_nq_nq_{n+1}$ is $S$-smooth for
infinitely many $n$, where $S$ is a fixed finite set of primes, then
there exists an effectively computable constant $c=c(S)>0$ such that
\[
\log q_{n+1}\gg_{\xi,S} q_n^c
\]
along those indices. Consequently, $\xi$ is a Liouville number. The
proof uses the determinant identity for consecutive convergents and a
fixed-base consequence of Yu's theorem on $p$-adic logarithmic forms.
\end{abstract}

\maketitle

\section{Introduction}

Let
\[
 \xi=[a_0;a_1,a_2,\ldots]\in\R\setminus\Q
\]
and let $p_n/q_n$ denote its convergents, with $q_n>0$. For a nonzero
integer $N$, let $P(N)$ denote the largest prime divisor of $|N|$, with
$P(\pm1)=1$.

Erd\H{o}s and Mahler \cite{ErdosMahler1939} initiated the study of prime
divisors of continued-fraction convergents. In particular, they proved
that if
\begin{equation}\label{eq:EM-three}
 P(q_{n-1}q_nq_{n+1})
\end{equation}
is bounded for infinitely many $n$, then $\xi$ is a Liouville number. They further conjectured that the condition
\begin{equation}\label{eq:EM-conj}
 P(p_nq_n)\ \text{is bounded for infinitely many }n
\end{equation}
has the same consequence. We shall refer to this as the Erdős--Mahler conjecture. Related results were obtained by Fraenkel
\cite{Fraenkel1962,Fraenkel1964} and Lelis and Marques \cite{LelisMarques2017}. 

In 1983, Shorey \cite[p.~132]{Shorey1983} proved that
\[
 P(q_{n-1}p_nq_n)\longrightarrow\infty.
\]
This settles the case of the preceding denominator. The situation for the
next denominator is subtler: bounded prime support of $p_nq_nq_{n+1}$
need not itself yield a contradiction, because $q_{n+1}$ may grow very
rapidly. Our main result identifies this growth as the decisive mechanism
and shows that it already forces $\xi$ to be Liouville. 

\begin{theorem}\label{thm:main}
Let $\xi$ be an irrational real number, and let $p_n/q_n$ denote its
convergents. Suppose that $p_nq_nq_{n+1}$ is $S$-smooth for infinitely
many $n$, for some fixed finite set of primes $S$. Then there exist
$c=c(S)>0$ and $C=C(\xi,S)>0$ such that
\[
\log q_{n+1}\ge Cq_n^c
\]
for every such $n\ge2$. Consequently,
$|q_n\xi-p_n|<\exp(-Cq_n^c)$ at these indices, and in particular $\xi$
is a Liouville number.
\end{theorem}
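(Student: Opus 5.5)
The plan rests on the determinant identity
\[
p_{n+1}q_n-p_nq_{n+1}=(-1)^{n},
\]
which we rewrite as $p_{n+1}q_n = p_nq_{n+1}\pm1$. At an index $n$ where $p_nq_nq_{n+1}$ is $S$-smooth, the integer $A:=p_nq_{n+1}$ is $S$-smooth, so $A\pm1 = p_{n+1}q_n$ is divisible by $q_n$. Every prime $p\mid q_n$ lies in $S$, and $q_n$ is coprime to $p_n$ and to $q_{n+1}$. Hence for each $p\in S$ with $p^{e}\,\|\,q_n$ we get $\ord_p(A\pm1)\ge e$, where $A=\prod_{\ell\in S}\ell^{a_\ell}$ is a product of primes of $S$ not including $p$; only the primes dividing $p_nq_{n+1}$ appear. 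The key point is that a unit-like $S$-number very close to $\mp1$ $p$-adically forces the exponents $a_\ell$ to be large. Taking the prime $p\in S$ that maximizes $p^{e}$, we have $p^{e}\ge q_n^{1/|S|}$, i.e.\ $e\log p\ge |S|^{-1}\log q_n$.

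Next we apply the fixed-base consequence of Yu's theorem on $p$-adic logarithmic forms. For fixed primes $\ell_1,\dots,\ell_k$ (here $k\le |S|$, and the heights are bounded in terms of $S$) and $\Lambda=\pm\prod \ell_i^{a_i}-1\neq 0$, it gives
\[
\ord_p(\Lambda)\le c_1(S)\log B,\qquad B=\max(3,\max_i |a_i|).
\]
Here $\Lambda\ne0$ because $A\ge1$ and $A\pm1=p_{n+1}q_n$ is nonzero for $n\ge2$ (note $p_{n+1}\neq0$ once $n\ge1$ unless $\xi$ is small, and the finitely many degenerate cases are absorbed into $C$). Combining this with the lower bound gives $\log q_n\le |S|\log p\cdot c_1\log B$, so $B\ge q_n^{c}$ with $c=1/(c_1|S|\max_{p\in S}\log p)$, which is effective.

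Now $B\le \max_\ell a_\ell\le \log A/\log 2$, and
\[
\log A=\log p_n+\log q_{n+1}\le \log(|\xi|+1)+\log q_n+\log q_{n+1}\le 3\log q_{n+1}+O_\xi(1),
\]
using $|p_n|\le(|\xi|+1)q_n$ and $q_n<q_{n+1}$. Hence $\log q_{n+1}\gg_{\xi,S} q_n^{c}$, and adjusting $C$ handles small $n$. The Liouville conclusion then follows from $|q_n\xi-p_n|<1/q_{n+1}$. Since $q_n\to\infty$ along the infinitely many indices, $\exp(-Cq_n^c)$ beats $q_n^{-w}$ for every $w$, giving $|\xi-p_n/q_n|<q_n^{-w}$ infinitely often.

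The main obstacle is making the Yu step clean. We must state precisely the fixed-base form of Yu's bound with its dependence of the constant on $S$ only, and we must handle the case where the prime $p$ dividing $q_n$ divides none of the $\ell_i$, which our setup guarantees by coprimality. We also need to check that $\ord_p$ of $\pm A-1$ rather than of $A\pm1$ fits the theorem's sign conventions; the sign is absorbed by allowing $-1$ as an extra generator, or by squaring.
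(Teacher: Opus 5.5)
Your proposal is correct and follows essentially the same route as the paper. The determinant identity gives $q_n\mid p_nq_{n+1}\pm1$ with $p_nq_{n+1}$ an $S$-smooth integer coprime to $q_n$; squaring (exactly as the paper does with $q_n\mid A_n^2-1$) and applying Yu's fixed-base $p$-adic bound yields $\log q_n\ll_S\log\log|p_nq_{n+1}|$, and the bound $\log|p_n|=O_\xi(\log q_n)$ transfers this to $\log q_{n+1}$. The only cosmetic difference is that you isolate the largest prime-power divisor $p^e\ge q_n^{1/|S|}$ of $q_n$, whereas the paper sums the $p$-adic bounds over all $p\in S$ in its smooth-congruence lemma.
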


In the terminology of \cite[Definition~2.1]{AriasKirilovMedeira2019},
Theorem~\ref{thm:main} in fact shows that $\xi$ is an exponential
Liouville number of order $1+\lceil1/c\rceil$. Its qualitative
conclusion already follows from Ridout's theorem \cite{Ridout1957}.
Indeed, if $q_{n+1}\le q_n^m$ along infinitely many relevant indices,
the determinant identity, applied to
$A_n=|p_n|q_{n+1}$ and $B_n=|p_{n+1}|q_n$, gives
$|A_n-B_n|=1$; the $S$-smoothness of $A_n$ and the polynomial bound on
the part of $B_n$ supported outside $S$ then contradict Ridout's
theorem. Thus $q_{n+1}>q_n^m$ eventually for every $m$. The new content
of Theorem~\ref{thm:main} is the exponential lower bound above.

The hypothesis is nonvacuous. There exist irrational numbers $\xi$ for
which $p_nq_nq_{n+1}$ is $3$-smooth for infinitely many $n$. To see
this, construct the continued fraction recursively. The cylinder
determined by any finite prefix contains some $2^u/3^v$, by the density
of $\{2^u/3^v:u,v\ge1\}$; extend the prefix so that
$p_n/q_n=2^u/3^v$. Since consecutive denominators are coprime,
$\gcd(q_{n-1},3)=1$, and since $2$ is a primitive root modulo $3^v$,
we may choose arbitrarily large $w$ with
$2^w\equiv q_{n-1}\pmod{3^v}$. Taking
$a_{n+1}=(2^w-q_{n-1})/3^v$ gives $q_{n+1}=2^w$, and hence
$p_nq_nq_{n+1}=2^{u+w}3^v$. Iterating gives the claim.

We now turn to the proof of Theorem~\ref{thm:main}. Its main input is a
fixed-base $p$-adic linear-form estimate, applied to the determinant
identity for consecutive convergents.

\section{Auxiliary results}

We record here the two ingredients needed for the proof of the main
theorem: standard properties of continued-fraction convergents and a
$p$-adic estimate for smooth integers.

Throughout, the notation $X\ll_{\mathcal P}Y$, equivalently
$X=O_{\mathcal P}(Y)$, means that $|X|\le CY$ for some constant
$C>0$ depending only on the parameters $\mathcal P$; the subscript is
omitted when the constant is absolute.

\subsection{Continued-fraction identities}

We begin by recalling the standard identities for continued-fraction
convergents that will be used in the proof; see, for example,
\cite[Chapter~1]{Bugeaud2004}.

\begin{equation}\label{eq:det}
 p_{n+1}q_n-p_nq_{n+1}=(-1)^n,
\end{equation}
\begin{equation}\label{eq:coprime}
 \gcd(p_n,q_n)=1,
 \qquad
 \gcd(q_n,q_{n+1})=1,
\end{equation}
together with the classical estimate
\begin{equation}\label{eq:approx}
 0<
 \left|\xi-\frac{p_n}{q_n}\right|
 <
 \frac{1}{q_nq_{n+1}}.
\end{equation}

\subsection{A $p$-adic estimate for smooth integers}

Let $S$ be a finite set of primes. A nonzero integer is said to be
\textit{$S$-smooth} if all of its prime divisors belong to $S$. For
$p$ prime and $x\in\Q^\times$, we write $\ord_p(x)$ for the normalized
$p$-adic valuation, so that $\ord_p(p)=1$.

We shall use the following fixed-base consequence of Yu's theorem \cite[\S1.1, p.~190]{Yu2007}.

\begin{lemma}\label{lem:Yu-special}
Let $p,\ell_1,\ldots,\ell_r$ be fixed primes, with $\ell_i\ne p$
for $1\le i\le r$. There exists a constant $C>0$, depending only on
$p,r,\ell_1,\ldots,\ell_r$, such that, for all integers
$b_1,\ldots,b_r$ satisfying
\[
 \ell_1^{b_1}\cdots\ell_r^{b_r}\ne1,
\]
one has
\begin{equation}\label{eq:Yu-special}
 \ord_p\!\left(
 \ell_1^{b_1}\cdots\ell_r^{b_r}-1
 \right)
 \le
 C\log\left(2+\max_{1\le i\le r}|b_i|\right).
\end{equation}
\end{lemma}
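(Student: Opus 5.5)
We deduce Lemma~\ref{lem:Yu-special} from Yu's estimate for $p$-adic linear forms in logarithms of rational numbers. Set $\alpha_i=\ell_i$ for $1\le i\le r$ and $\Lambda=\alpha_1^{b_1}\cdots\alpha_r^{b_r}-1\neq0$. The $\alpha_i$ are then fixed rationals, of height $\log\ell_i$, in the field $K=\Q$ of degree $d=1$. Since each $\ell_i\neq p$, they are $p$-adic units, so the hypotheses of Yu's theorem \cite{Yu2007} hold. In the form we need, that theorem gives
\[
 \ord_p(\Lambda)\le C_1(r,d)\,\frac{p}{(\log p)^2}\,\prod_{i=1}^r\log\ell_i\cdot \log B,
\]
where $B$ is a quantity with $B\ll \max(2,\max_i|b_i|)$ up to factors depending only on $p$ and the $\ell_i$. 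In Yu's normalisation $B$ is a weighted maximum such as $\max_i |b_i|\,h'(\alpha_j)/h'(\alpha_i)$, or a fixed multiple of $\max|b_i|$; in any case $\log B\le \log(2+\max|b_i|)+O_{p,\ell}(1)$. All factors other than $\log B$ depend only on $p,r,\ell_1,\dots,\ell_r$. Since $\log(2+\max|b_i|)\ge\log2>0$, the additive constant $O(1)$ can be absorbed into the multiplicative constant. This gives \eqref{eq:Yu-special}.

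Two bookkeeping points need checking. First, Yu's main theorem sometimes carries extra factors such as $\log(e^4(d+1)\max(h'))$ or requires $\log B$ to be replaced by $\max(\log B,\dots)$. All of these are bounded in terms of $p,r,\ell_i$ and are absorbed into $C$. Second, some versions require the $\alpha_i$ to be multiplicatively independent, or require $\Lambda\neq0$. Nonvanishing is assumed in the lemma. Independence holds automatically because the $\ell_i$ are distinct primes; if the list contains repeats, we first merge equal primes, which only decreases $r$ and combines exponents, so $\max|b_i|$ grows at most by a factor $r$.

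A further subtlety is Yu's condition that $\alpha_i\equiv1$ modulo a suitable power of $p$, or the use of the order of $\alpha_i$ in the residue field. If the version we cite demands this, we raise to a power: put $m=p^{f}-1$ or $m=\mathrm{lcm}$ of the orders of $\ell_i$ modulo $p^2$. Then $\Lambda^{\ast}=\prod\ell_i^{mb_i}-1=\Lambda\cdot(\text{sum})$, and $\ord_p(\Lambda)\le\ord_p(\Lambda^\ast)$ because $\Lambda\mid\Lambda^\ast$ in $\Z_p$ (as $x-1\mid x^m-1$). This changes the $b_i$ only by the fixed factor $m$. We expect this reduction to the precise hypotheses of Yu's statement, rather than any estimate, to be the main obstacle. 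Everything else is tracking which quantities are fixed.
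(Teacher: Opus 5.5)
Your proposal is correct and matches the paper, which gives no separate argument and simply treats the lemma as the specialization of Yu's theorem to the fixed rational bases $\ell_i$ (so $d=1$), with every factor other than $\log B$ absorbed into $C$. If you use the power-raising reduction, add the check that $\Lambda^\ast\ne0$: this holds because $\prod\ell_i^{b_i}$ is a positive rational different from $1$, so no power of it equals $1$.
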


The following consequence is the form that will be applied to the
denominators of continued-fraction convergents.

\begin{lemma}\label{lem:smooth-congruence}
Let $S$ be a finite set of primes. There exists a constant $C_S>0$
such that, whenever $A,B>1$ are coprime $S$-smooth integers satisfying
\begin{equation}\label{eq:divides-abstract}
 B\mid A^2-1,
\end{equation}
one has
\begin{equation}\label{eq:smooth-congruence}
 \log B\le C_S\log\log A
\end{equation}
for all sufficiently large $A$.
\end{lemma}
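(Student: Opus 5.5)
Since $A$ and $B$ are coprime, the primes dividing $B$ all lie in $S$ and none of them divides $A$. The plan is to bound each $p$-adic valuation of $B$ separately using Lemma~\ref{lem:Yu-special}, and then add up the contributions over $p\in S$.

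Write $B=\prod_{p\in S}p^{\beta_p}$. By \eqref{eq:divides-abstract}, for each $p\mid B$ we have
\[
\beta_p=\ord_p(B)\le \ord_p(A^2-1).
\]
Now factor $A=\prod_{\ell\in S}\ell^{a_\ell}$. Coprimality gives $a_p=0$ for every $p\mid B$, so $A$ is a product of powers of primes $\ell\in S\setminus\{p\}$. Hence
\[
A^2=\prod_{\ell\in S,\ \ell\ne p}\ell^{2a_\ell}.
\]
Since $A>1$ we have $A^2\ne1$, so Lemma~\ref{lem:Yu-special} applies with base prime $p$, the primes $\ell\in S\setminus\{p\}$, and exponents $b_\ell=2a_\ell$. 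The number $r$ of these primes and the primes themselves are determined by $S$ and $p$, so the constant is some $C_p$ depending only on $S$. This yields
\[
\beta_p\le C_p\log\Bigl(2+2\max_\ell a_\ell\Bigr).
\]

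The exponents are controlled by the size of $A$: each $a_\ell\le \log A/\log 2$. Therefore
\[
\beta_p\le C_p\log\bigl(2+3\log A\bigr)\le 2C_p\log\log A
\]
once $A$ is sufficiently large, say $A\ge A_0(S)$, so that $2+3\log A\le(\log A)^2$.

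Summing over $p\in S$ gives
\[
\log B=\sum_{p}\beta_p\log p\le \Bigl(\sum_{p\in S}2C_p\log p\Bigr)\log\log A,
\]
which is \eqref{eq:smooth-congruence} with $C_S=\sum_{p\in S}2C_p\log p$.

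There is no real obstacle here; the argument is a direct application of Lemma~\ref{lem:Yu-special}. The only care needed is in two places. First, we must check that the nondegeneracy hypothesis $A^2\ne1$ holds, which it does because $A>1$. Second, we must ensure $p\notin\{\ell_i\}$, which is exactly where the coprimality of $A$ and $B$ is used. The restriction to "sufficiently large $A$" serves only to absorb the additive constant inside the logarithm, turning $\log(2+3\log A)$ into a multiple of $\log\log A$.
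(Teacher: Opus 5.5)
Your proposal is correct and follows the paper's proof essentially step for step. You bound $\ord_p(B)$ by $\ord_p(A^2-1)$, apply Lemma~\ref{lem:Yu-special} with base $p$ over $S\setminus\{p\}$ and exponents $2a_\ell$ (using coprimality and $A>1$), bound $a_\ell\le\log A/\log 2$, and sum over $p\in S$; your explicit threshold $2+3\log A\le(\log A)^2$ and the constant $C_S=\sum_{p\in S}2C_p\log p$ just make the paper's $\ll_S$ bound explicit.
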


\begin{proof}
Let $p\in S$ be a prime divisor of $B$. Since $\gcd(A,B)=1$, we have
$p\nmid A$, and hence
\[
 A=\prod_{\ell\in S\setminus\{p\}}\ell^{b_\ell},
 \qquad b_\ell\in\Z_{\ge0}.
\]
We apply Lemma~\ref{lem:Yu-special} to the fixed primes
$\ell\in S\setminus\{p\}$ with exponents $2b_\ell$. Since $A>1$,
\[
 \prod_{\ell\in S\setminus\{p\}}\ell^{2b_\ell}
 =A^2\ne1,
\]
so the non-vanishing hypothesis is satisfied. Therefore,
\begin{equation}\label{eq:Yu-use}
 \ord_p(A^2-1)
 \ll_{S,p}
 \log\left(
 2+\max_{\ell\in S\setminus\{p\}} b_\ell
 \right).
\end{equation}
Moreover,
\[
 b_\ell\log\ell\le\log A
\]
for every $\ell\in S\setminus\{p\}$, and hence
\[
 \max_{\ell\in S\setminus\{p\}}b_\ell
 \le \frac{\log A}{\log2}.
\]
It follows that
\begin{equation}\label{eq:vp-loglog}
 \ord_p(A^2-1)\ll_{S,p}\log\log A
\end{equation}
for all sufficiently large $A$.

Since $B\mid A^2-1$, we have
\[
 \ord_p(B)\le \ord_p(A^2-1).
\]
Summing over the primes $p\in S$ gives
\[
 \log B
 =\sum_{p\in S}\ord_p(B)\log p
 \ll_S\log\log A,
\]
which proves the lemma.
\end{proof}

We are now ready to prove the main result.

\section{Proof of Theorem~\ref{thm:main}}

Let $S$ be a finite set of primes such that $p_nq_nq_{n+1}$ is
$S$-smooth along an infinite set of indices $\mathcal N$. Since $\xi$ is irrational,
$p_n\ne0$ for all sufficiently large $n$ in this set. Put
\[
 A_n=|p_n|q_{n+1},
 \qquad
 B_n=q_n.
\]
Then $A_n$ and $B_n$ are $S$-smooth. Moreover,
\eqref{eq:coprime} gives
\[
 \gcd(A_n,B_n)=1.
\]
By \eqref{eq:det},
\[
 p_nq_{n+1}\equiv\pm1\pmod{q_n},
\]
and therefore
\begin{equation}\label{eq:central-congruence}
 q_n\mid A_n^2-1.
\end{equation}
Lemma~\ref{lem:smooth-congruence} now yields
\begin{equation}\label{eq:logqn-loglogA}
 \log q_n\ll_S\log\log A_n.
\end{equation}
Consequently, there exist constants $c,c_1>0$, depending only on $S$,
such that
\begin{equation}\label{eq:logA-power}
 \log A_n\ge c_1q_n^c
\end{equation}
for all sufficiently large $n\in\mathcal N$.

Since $p_n/q_n\to\xi$, we have $p_n=O_{\xi}(q_n)$. Hence
\[
 \log q_{n+1}
 =\log A_n-\log|p_n|
 \ge c_1q_n^c-O_\xi(\log q_n).
\]

Therefore, we obtain
\begin{equation}\label{eq:quant}
 \log q_{n+1}\ge Cq_n^c
\end{equation}
for some $C=C(\xi,S)>0$ and all sufficiently large $n\in\mathcal N$.
Consequently, for every $m\ge1$,
\[
 q_{n+1}>q_n^m
\]
whenever $n\in\mathcal N$ is sufficiently large. Hence, by
\eqref{eq:approx},
\[
 0<
 \left|\xi-\frac{p_n}{q_n}\right|
 <
 \frac{1}{q_n^{m+1}}.
\]
Since $\mathcal N$ is infinite and $m$ is arbitrary, $\xi$ is a
Liouville number.
\qed

\section*{Acknowledgements}

This work was supported by the Brazilian National Council for Scientific and Technological Development (CNPq), Grant No.~304467/2023-5.

\section*{Declaration on the use of AI tools}
OpenAI's ChatGPT was used to assist
with English-language and stylistic editing of the manuscript. The author takes full responsibility for the manuscript.

\end{document}